\title{An equivalence linking CM-types $A_\infty$ and $D_\infty$}
\date{\today}
\keywords{maximal Cohen-Macaulay modules, countable CM-type, CM-type $D_\infty$, CM-type $A_\infty$, matrix factorizations}
\subjclass[2020]{13C14 (primary); 13A02; 16G50}
\author[C. Cummings]{Charley Cummings}
\address{Charley Cummings, Department of Mathematics, Aarhus University, Ny Munkegade 118, 8000 Aarhus C, Denmark}
\email{c.cummings@math.au.dk}
\author[S. Gratz]{Sira Gratz}
\address{Sira Gratz, Department of Mathematics, Aarhus University, Ny Munkegade 118, 8000 Aarhus C, Denmark}
\email{sira@math.au.dk}
\author[E. Kirkman]{Ellen Kirkman}
\address{Ellen Kirkman, Department of Mathematics, Wake Forest University, Winston-Salem, NC, USA}
\email{kirkman@wfu.edu}
\author[J.~C.~Letz]{Janina C. Letz}
\address{Janina~C.~Letz,
Institute of Mathematics,
Paderborn University,
Warburger Str. 100,
33098 Paderborn,
Germany
}
\email{jletz@math.upb.de}
\author[J. D. Rock]{J. Daisie Rock}
\address{J. Daisie Rock, Section of Algebra, Department of Mathematics, KU Leuven, Celestijnenlaan 200B bus 2400,
B-3001 Leuven,
Belgium \newline
\indent \indent Department of Mathematics W16, Ghent University, 9000 Ghent, East Flanders, Belgium}
\email{jobdaisie.rock@kuleuven.be}
\author[Š. Špenko]{Špela Špenko}
\address{Špela Špenko, Département de Mathématique, Université Libre de Bruxelles, Campus de la
Plaine CP 213, Bld du Triomphe, B-1050 Bruxelles, Belgium}
\email{spela.spenko@ulb.be}
\begin{document}

\begin{abstract}
We show that, for a specific grading, the stable categories of graded maximal Cohen-Macaulay modules over hypersurfaces of type $A_\infty$ and $D_\infty$ are equivalent.
\end{abstract}

\maketitle


\section{Introduction}

There are two hypersurface singularities of countably infinite Cohen-Macaulay type of embedding dimension $2$: type $A_\infty$ and type $D_\infty$. We consider these singularities in the following graded setting: Let $k$ be an uncountable algebraically closed field of characteristic different from $2$ and let $S = k[x,y]$ be the $\BZ$-graded polynomial ring with $x$ in degree $1$ and $y$ in degree $-1$. The (stable) category of graded maximal Cohen-Macaulay modules of the type $A_\infty$ hypersurface singularity $S/\langle x^2\rangle$ has been extensively studied from a cluster-theoretic perspective in \cite{Fisher:2017,Paquette/Yildirim:2021,August/Cheung/Faber/Gratz/Schroll:2023,August/Cheung/Faber/Gratz/Schroll:2024,Cummings/Gratz:2024,Canakci/Kalck/Pressland:2025}. We consider that same grading for the type $D_\infty$ hypersurface singularity $S/\langle x^2y \rangle$. 

\begin{introthm}\label{T:main_thm}
	We have a triangular equivalence
\[
	\smcm[\BZ]{S/\langle x^2 \rangle} \cong \smcm[\BZ]{S/\langle x^2 y \rangle}\,,
\]
where $\smcm[\BZ]{R}$ is the stable category of $\BZ$-graded maximal Cohen-Macaulay modules over $R$.
\end{introthm}

For a Gorenstein ring, the stable category of maximal Cohen-Macaulay modules is equivalent to the singularity category; see \cite[Theorem~4.4.1]{Buchweitz:2021}. Triangular equivalences between singularity categories are very rare. Apart from Knörrer periodicity \cite{Knorrer:1987,Solberg:1989}, in the case of Gorenstein singularities no examples exist. However, Kalck \cite{Kalck:2021} recently constructed  equivalences for specific non-Gorenstein quotient singularities. In the case of graded singularity categories, there is an analogue of Knörrer periodicity for certain GL complete intersections  \cite[Corollary 3.23]{Herschend/Iyama/Minamoto/Oppermann:2023}. We do not know of any other equivalences. 

In some cases it has been established that there are simply no equivalences besides Kn\"orrer periodicity: For two complete local singularities over $\BC$, where one is an isolated hypersurface singularity or an isolated Gorenstein singularity, Kalck shows that if the dg singularity categories are quasi-equivalent, then the rings are related via Knörrer periodicity; see \cite{Kalck:2021a}. Let us also mention that for excellent local Gorenstein hypersurfaces, an equivalence of singularity categories implies a homeomorphism of singular loci; see \cite[Theorem 4.4]{Matsui:2019}. 


\begin{ack}
This work started during the WINART4 workshop. We are thankful to the organisers of that meeting. 
C.C.\@ and S.G.\@ was supported by VILLUM FONDEN (Grant Number VIL42076). 
J.C.L.\@ was partly supported by the Deutsche Forschungsgemeinschaft (SFB-TRR 358/1 2023 - 491392403).
J.D.R.\@ is supported by FWO grant 1298325N and partially supported by the FWO grants G0F5921N (Odysseus) and G023721N, and by the KU Leuven grant iBOF/23/064.
Š.Š.\@ was supported by an ARC grant from the Université libre de Bruxelles. 
\end{ack}

\section{Stable category of maximal Cohen-Macaulay modules}

Let $R$ be a noetherian Gorenstein ring. We denote the category of maximal Cohen-Macaulay modules over $R$ by $\mcm{R}$ and its stabilisation by $\smcm{R}$. The stable category of maximal Cohen-Macaulay modules over a Gorenstein ring $R$ is equivalent to several other categories. It is triangular equivalent to the homotopy category of acyclic complexes of finitely generated projective modules over $R$; see \cite[Theorem~4.4.1]{Buchweitz:2021}. This equivalence provides a convenient framework for computing morphisms in the stable category. Namely, by \cite[Lemma~6.1.2]{Buchweitz:2021}, we have
\begin{equation} \label{shom_hh_CR}
\Hom{\smcm{R}}{M}{N} = \hh{0} \Hom{R}{\CR(M)}{N}
\end{equation}
when $M$ and $N$ are maximal Cohen-Macaulay modules and $\CR(M)$ is a complete resolution of $M$; that is an acyclic complex of finitely generated projective $R$-modules with $\coker(d^{-1}) = M$.


For a hypersurface $R=S/\langle f\rangle$ of a polynomial ring $S$ and $f \in S$, there is a correspondence of the maximal Cohen-Macaulay $R$-modules and matrix factorizations of $f$. A matrix factorisation $(\Phi,\Psi)$ consists of maps of free $S$-modules
\[
F\xrightarrow{\Psi} G\xrightarrow{\Phi} F
\]
such that $\Phi \Psi = f \id_F$ and $\Psi \Phi = f \id_G$. We denote the category of matrix factorisations of $f$ by $\mf{S}{f}$ and its stabilisation by $\smf{S}{f}$; for more details see \cite[Chapter~7]{Yoshino:1990}. There is a triangular equivalence 
\begin{equation*}
\smf{S}{f} \to \smcm{S/\langle f \rangle} \quad \text{given by} \quad (\Phi,\Psi) \mapsto \coker(\Phi)\,;
\end{equation*}
see \cite{Eisenbud:1980} and \cite{Orlov:2004}.
Moreover, a matrix factorisation $(\Phi,\Psi)$ induces a complete resolution of $\coker(\Phi)$ given by
\begin{equation*}
\cdots \to \bar{F} \xrightarrow{\bar{\Psi}} \bar{G} \xrightarrow{\bar{\Phi}} \bar{F} \to \cdots
\end{equation*}
where $\bar{(-)} \coloneqq S/\langle f \rangle \otimes_S -$. From this description it is clear that the syzygy functor on $\smcm{S/\langle f \rangle}$ corresponds to the inverse suspension functor on the category of acyclic complexes of finitely generated projective $S/\langle f \rangle$-modules.


\section{MCM-modules over a \texorpdfstring{$D_\infty$}{D-infinity}-singularity}

Let $k$ be an uncountable algebraically closed field of characteristic not 2 and $S = k[x,y]$ be a $\BZ$-graded polynomial ring with $|x| = 1$ and $|y| = -1$. We consider the hypersurface $R = S/\langle x^2y \rangle$. By \cite[Theorem~B]{Buchweitz/Greuel/Schreyer:1987}, the ring $R$ has countable CM-type; also see \cite[Proposition~14.19]{Leuschke/Wiegand:2012}. The indecomposable maximal Cohen-Macaulay $R$-modules were classified in \cite[Proposition~4.2]{Buchweitz/Greuel/Schreyer:1987}. The indecomposable graded maximal Cohen-Macaulay $R$-modules and their corresponding matrix factorizations are depicted in \cref{indec_mcm}. The indices of the modules are chosen so that the generators lie in degree $i$ (and $j$). 

\begin{figure}
\centering

\includegraphics[scale=0.8]{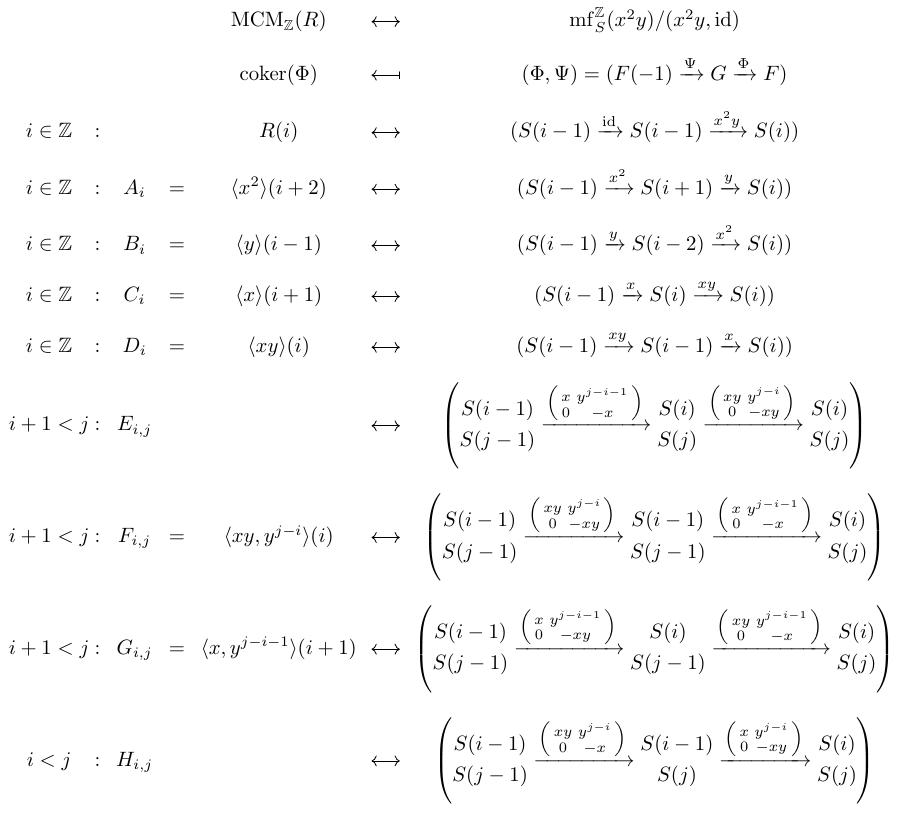}
\caption{The indecomposable graded maximal Cohen-Macaulay modules over $R = k[x,y]/\langle x^2y \rangle$ with the grading $|x| = 1$ and $|y| = -1$, and their corresponding matrix factorizations.}
\label{indec_mcm}
\end{figure}

While the modules $E_{i,i+1}$, $F_{i,i+1}$, $G_{i,i+1}$ and $H_{i,i}$ are well-defined, they do not appear in the list, as they are not `new'  maximal Cohen-Macaulay modules. In fact, we have
\begin{equation*}
E_{i,i+1} = A_i \oplus R(i+1) \,,\quad F_{i,i+1} = B_{i+1} \,,\quad G_{i,i+1} = R(i+1) \quad \text{and} \quad H_{i,i} = R(i)\,.
\end{equation*}
This means, that in the stable category of  maximal Cohen-Macaulay modules we can replace $A_i$ and $B_i$ by $E_{i,i+1}$ and $F_{i-1,i}$, respectively. 


From the matrix factorisations it is easy to see
\begin{gather} \label{MCM_syzygy}
\begin{aligned}
\Omega A_i &= B_{i+1} \,, & \Omega B_i &= A_{i-2} \\
\Omega C_i &= D_i \,, & \Omega D_i &= C_{i-1} \\
\Omega E_{i,j} &= F_{i,j} \,, & \Omega F_{i,j} &= E_{i-1,j-1} \\
\Omega G_{i,j} &= H_{i,j-1} \,, & \Omega H_{i,j} &= G_{i-1,j}\,.
\end{aligned}
\end{gather}

\begin{lemma}\label{L:generates}
	The object $C_0$ classically generates ${\smcm[\BZ]{R}}$, i.e.\ the thick closure of $C_0$ is ${\smcm[\BZ]{R}}$.
\end{lemma}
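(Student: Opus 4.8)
The plan is to show that the thick subcategory generated by $C_0$ contains every indecomposable graded maximal Cohen-Macaulay $R$-module from \cref{indec_mcm}, since by the classification these (together with their shifts) are the building blocks of $\smcm[\BZ]{R}$. First I would record that a thick subcategory is closed under the syzygy functor $\Omega$ and its inverse, under finite direct sums and summands, and under cones of morphisms. From \eqref{MCM_syzygy} we have $\Omega C_i = D_i$ and $\Omega D_i = C_{i-1}$, so the whole families $\{C_i\}_{i\in\BZ}$ and $\{D_i\}_{i\in\BZ}$ lie in the thick closure of $C_0$ as soon as we can move between the different indices $i$. The degree shift $R(1)$ is zero in the stable category, but the modules $C_i$ for different $i$ are genuinely different objects, so the key point is to produce, inside the thick closure, a morphism whose cone relates $C_i$ to $C_{i+1}$ (equivalently, relates $C_0$ to the $E$, $F$, $G$, $H$ families, which carry two indices).

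The main work is therefore to exhibit short exact sequences of maximal Cohen-Macaulay modules — which become triangles in $\smcm[\BZ]{R}$ — linking $C_0$ to the remaining indecomposables. I would look for sequences of the shape $0 \to C_i \to (\text{sum of }C\text{'s, }D\text{'s, or }R(\ast)) \to (\text{another indecomposable}) \to 0$ by reading off the matrix factorizations in \cref{indec_mcm}: a factorization of $x^2y$ whose matrix is block upper-triangular with the factorizations of $C_i$ and of a second module on the diagonal exhibits the middle term as an extension, hence a triangle whose third term is the cone. Concretely I expect to build the modules $G_{i,j}$ and $H_{i,j}$ (whose syzygies shift only one of the two indices, by \eqref{MCM_syzygy}) from the $C$'s and $D$'s and copies of $R(\ast)$, and then obtain $E_{i,j}$ and $F_{i,j}$ — and via $E_{i,i+1} = A_i \oplus R(i+1)$, $F_{i-1,i} = B_i$ also the modules $A_i$, $B_i$ — by further extensions. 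Once all indecomposables (with all index values) are in the thick closure, Lemma~\ref{L:generates} follows because, by the classification in \cite[Proposition~4.2]{Buchweitz/Greuel/Schreyer:1987}, every object of $\smcm[\BZ]{R}$ is a finite direct sum of shifts of these.

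The step I expect to be the genuine obstacle is getting the index $i$ to move: the syzygy relations in \eqref{MCM_syzygy} for the two-index families only ever shift $i$ and $j$ together or one at a time in a constrained way, and $\Omega$ alone never changes $i$ for the $C$ and $D$ families. So one really must find an honest non-split extension — not just a syzygy manipulation — that raises or lowers an index, and then check it is non-split (equivalently, nonzero in $\smcm[\BZ]{R}$) by a small Ext or $\Hom$ computation, for instance via \eqref{shom_hh_CR} applied to the complete resolution coming from the relevant matrix factorization. After that, closure of thick subcategories under $\Omega^{\pm 1}$, sums, summands, and cones does the rest, and the verification that the list of indecomposables is exhausted is bookkeeping against \cref{indec_mcm}.
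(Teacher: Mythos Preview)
Your overall strategy---syzygies for the $C$ and $D$ families, then short exact sequences for the two-index families---is exactly the paper's approach, but you have manufactured an obstacle that does not exist. You correctly record $\Omega C_i = D_i$ and $\Omega D_i = C_{i-1}$; composing these gives $\Omega^2 C_i = C_{i-1}$, so the index \emph{does} move under the syzygy functor alone, and every $C_i$ and $D_i$ is already a (de)suspension of $C_0$ with no further work required. Your later assertion that ``$\Omega$ alone never changes $i$ for the $C$ and $D$ families'' directly contradicts the relation you wrote down two paragraphs earlier, and the ``genuine obstacle'' you anticipate is illusory.

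With all $C_i$ and $D_i$ already in the thick closure, the paper obtains each two-index module by a single short exact sequence with $C$'s or $D$'s as the outer terms (e.g.\ $0 \to D_i \to E_{i,j} \to D_j \to 0$, and analogously for $F$, $G$, $H$), read off from the block-triangular shape of the matrix factorizations just as you suggest. There is no need to route through $G$ and $H$ to reach $E$ and $F$. More importantly, your plan to verify that these extensions are non-split is unnecessary: a split short exact sequence still induces a (distinguished) triangle, and its middle term is then a direct sum of the outer terms, which lies in the thick closure regardless. The Hom computation via \eqref{shom_hh_CR} is not needed for this lemma.
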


\begin{proof}
For every $i \in \BZ$, both $C_i$ and $D_i$ can be obtained as an appropriate suspension or desuspension of $C_0$. Moreover, for all $i,j \in \BZ$ we have the following short exact sequences in $\mcm[\BZ]{R}$:
\begin{equation*}
\begin{tikzcd}[ampersand replacement=\&,row sep=0]
0 \ar[r] \& D_i \ar[r,"{\begin{psmallmatrix} 1 \\ 0 \end{psmallmatrix}}"] \& E_{i,j} \ar[r,"{\begin{psmallmatrix} 0 & 1 \end{psmallmatrix}}"] \& D_j \ar[r] \& 0 \& \text{for} \& j \geq i+1 \\
0 \ar[r] \& C_i \ar[r,"{\begin{psmallmatrix} 1 \\ 0 \end{psmallmatrix}}"] \& F_{i,j} \ar[r,"{\begin{psmallmatrix} 0 & 1 \end{psmallmatrix}}"] \& C_j \ar[r] \& 0 \& \text{for} \&[-3em] j \geq i+1 \\
0 \ar[r] \& C_i \ar[r,"{\begin{psmallmatrix} 1 \\ 0 \end{psmallmatrix}}"] \& G_{i,j} \ar[r,"{\begin{psmallmatrix} 0 & 1 \end{psmallmatrix}}"] \& D_j \ar[r] \& 0 \& \text{for} \& j \geq i+1 \\
0 \ar[r] \& D_i \ar[r,"{\begin{psmallmatrix} 1 \\ 0 \end{psmallmatrix}}"] \& H_{i,j} \ar[r,"{\begin{psmallmatrix} 0 & 1 \end{psmallmatrix}}"] \& C_j \ar[r] \& 0 \& \text{for} \& j \geq i\,.
\end{tikzcd}
\end{equation*}
These short exact sequences induce exact triangles in $\smcm[\BZ]{R}$. Hence all indecomposable maximal Cohen-Macaulay modules lie in the thick closure of $C_0$, and $C_0$ classically generates $\smcm[\BZ]{R}$.
\end{proof}

Since $C_0$ is a classical generator of $\smcm[\BZ]{R}$, to understand the category $\smcm[\BZ]{R}$, it is enough to understand the graded endomorphism ring of $C_0$. 

\begin{lemma}\label{L:endoring}
	The graded endomorphism ring 
\begin{equation*}
\sExt[*]{R}{C_0}{C_0} = \bigoplus_{n \in \BZ} \Hom{\smcm[\BZ]{R}}{C_0}{\Omega^{-n} C_0}
\end{equation*}
is isomorphic to $k[t]$ as a graded algebra with $t$ in degree $-1$.
\end{lemma}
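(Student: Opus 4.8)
The plan is to compute $\sExt[*]{R}{C_0}{C_0}$ from a complete resolution of $C_0$ via \eqref{shom_hh_CR}, and then to pin down the multiplication by exhibiting explicit representatives of the powers of a degree $-1$ class. From \cref{indec_mcm} one reads off that, up to a grading shift (and, if necessary, replacing $C_0$ by the stably isomorphic $\Omega C_0$, which does not change the graded ring $\sExt[*]{R}{C_0}{C_0}$), we may take $C_0 = R/\langle x\rangle\cong k[y]$, arising from the matrix factorisation $(x,xy)$ of $x^2y$; its induced complete resolution is the $2$-periodic-up-to-twist acyclic complex of free graded $R$-modules
\[
	\CR(C_0)\colon\qquad\cdots\ \longrightarrow\ R(a)\ \xrightarrow{\ \cdot\, xy\ }\ R(b)\ \xrightarrow{\ \cdot\, x\ }\ R(c)\ \xrightarrow{\ \cdot\, xy\ }\ \cdots\,,
\]
whose terms $\CR(C_0)^i = R(a_i)$ have twists growing weakly monotonically, with $a_i\ge 0$ for $i\ge 0$ and $a_i<0$ for $i<0$. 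The crucial observation is that both differentials---multiplication by $x$ and by $xy$---lie in $\langle x\rangle=\operatorname{ann}_R(C_0)$, so the complex $\Hom{R}{\CR(C_0)}{C_0}$ has all differentials zero. Combining this with \eqref{shom_hh_CR} and the syzygy computations \eqref{MCM_syzygy} gives
\[
	\sExt[n]{R}{C_0}{C_0}\ \cong\ \bigl(\Hom{R}{R(a_{-n})}{C_0}\bigr)_0\ =\ \bigl(C_0(-a_{-n})\bigr)_0\ =\ (k[y])_{-a_{-n}}\,,
\]
and since $|y|=-1$ the graded module $k[y]$ is supported in degrees $\le 0$, so this space is one-dimensional when $a_{-n}\ge 0$, i.e.\ when $n\le 0$, and vanishes otherwise. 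Thus $\sExt[*]{R}{C_0}{C_0}$ has a one-dimensional $k$-span in each non-positive degree and vanishes in positive degrees, which matches $k[t]$ with $|t|=-1$ as a graded vector space.

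The next step is to verify the ring structure: it suffices to show that a nonzero class $t\in\sExt[-1]{R}{C_0}{C_0}$ satisfies $t^n\neq 0$ for all $n\ge 1$, for then $t^n$ spans the one-dimensional space $\sExt[-n]{R}{C_0}{C_0}$ and hence $\sExt[*]{R}{C_0}{C_0}=k[t]$. With $\Omega C_0$ identified with the ideal $\langle x\rangle\subseteq R$, a nonzero representative of $t$ is the module map $C_0=R/\langle x\rangle\to\langle x\rangle$ given by $\bar 1\mapsto xy$; chasing syzygies (the relevant syzygy maps are multiplication by $y$), $t^n$ is represented by a map $C_0\to\Omega^n C_0$ with $\bar 1\mapsto xy^{\,n}$, whose value $xy^{\,n}$ is a nonzero element of $\Omega^n C_0\subseteq\langle x\rangle$ inside a twist of $R$. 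This does not vanish in $\smcm[\BZ]{R}$: any factorisation $C_0\xrightarrow{\alpha}F\xrightarrow{\beta}\Omega^n C_0$ through a free module $F$ would have $\alpha(\bar 1)\in(0:_F x)=xy\cdot F$, whence $\beta\alpha(\bar 1)\in xy\cdot\Omega^n C_0\subseteq xy\cdot\langle x\rangle=\langle x^2y\rangle=0$ in $R$, contradicting $\beta\alpha(\bar 1)=xy^{\,n}\neq 0$. Since composing these representatives multiplies the monomials, $t^m\cdot t^n=t^{m+n}$, finishing the identification with $k[t]$.

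The main obstacle is bookkeeping rather than conceptual difficulty: one has to extract $C_0$ and, crucially, the grading twists of its complete resolution from \cref{indec_mcm}, and track them carefully so that the polynomial generator comes out in degree $-1$ rather than $+1$. The one genuinely non-routine point---beyond the dimension count---is the last step, namely that the ring is an honest polynomial ring and not a truncation; this needs the explicit non-factorisation argument above, not merely the fact that each graded piece is at most one-dimensional.
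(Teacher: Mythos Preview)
Your proof is correct in substance and takes a genuinely different route from the paper. The paper computes $\Hom_{\smcm[\BZ]{R}}(C_i,C_k)$ and $\Hom_{\smcm[\BZ]{R}}(C_i,D_k)$ by mapping the complete resolution into $R(m)$, classifying all degree-zero cocycles and their homotopies there, and then restricting to the ideals $C_k\subseteq R(k+1)$ and $D_k\subseteq R(k)$; explicit chain maps given by multiplication by powers of $y$ (the diagrams \eqref{E:CtoC} and \eqref{E:CtoD}) then pin down the algebra structure directly. Your key observation---that both differentials $x$ and $xy$ lie in $\operatorname{ann}_R(R/\langle x\rangle)$, so $\Hom_R(\CR,R/\langle x\rangle)$ has zero differential---bypasses the cocycle/coboundary bookkeeping entirely and reads off the graded dimensions from the twists alone; this is cleaner for the vector-space statement. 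In exchange, the paper's explicit chain-map generators make the multiplicativity immediate, whereas you need the separate non-factorisation argument to rule out a truncated polynomial ring.

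Two minor slips, neither fatal. First, $\Omega C_0$ is \emph{not} stably isomorphic to $C_0$---they are the distinct indecomposables $D_0$ and $C_0$---but what you actually use, and what is true, is that the autoequivalence $\Omega$ preserves the graded endomorphism ring. Second, the formula $t^n\colon \bar 1\mapsto xy^{\,n}$ is not quite right: the chain map lifting $t$ alternates between multiplication by $y$ and by $1$ (not uniformly by $y$), so $t^n$ sends $\bar 1$ to $xy^{\lceil (n+1)/2\rceil}$ in the appropriate twist of $R$. However, your non-factorisation argument only needs the image to be a nonzero element of $\langle x\rangle$ inside a free module, and that much survives.
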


\begin{proof}
We compute the spaces $\Hom{\smcm[\BZ]{R}}{C_i}{C_k}$ and $\Hom{\smcm[\BZ]{R}}{C_i}{D_k}$ for all $i,k \in \BZ$ using \cref{shom_hh_CR}. As $C_k$ and $D_k$ are isomorphic to ideals, we use the following strategy:
\begin{enumerate}
\item find the cocycles in $\Hom{\grmod{R}}{\CR(C_i)}{R(m)}$; 
\item for each cocycle in $\Hom{\grmod{R}}{\CR(C_i)}{R(m)}$ identify all homotopies; 
\item for the ideals $C_k \subseteq R(k+1)$ and $D_k \subseteq R(k)$ identify the cocycles that restrict to morphisms to each of these ideals, while their homotopies do not restrict to the ideal.
\end{enumerate}

We consider the diagram
\begin{equation*}
\begin{tikzcd}
\CR(C_i) &[-3em] : &[-2em] \cdots \ar[r] & R(i) \ar[r,"xy"] \ar[dr,"0"] & R(i) \ar[d] \ar[r,"x"] & R(i+1) \ar[r] \ar[dl,dashed] & \cdots \nospacepunct{.} \\
& & & & R(m)
\end{tikzcd}
\end{equation*}
This yields the following cocycles with associated homotopies:
\begin{enumerate}[a)]
\item $x = \partial(1)$ when $m = i+1$; 
\item $x^{m-i} = \partial(x^{m-i-1})$ when $m > i+1$; 
\item $x y^{i-m+1} = \partial(y^{i-m+1})$ when $i+1 > m$. 
\end{enumerate}

We compute the morphisms with target $C_k = \langle x \rangle(k+1) \subseteq R(k+1)$; that is $m = k+1$. We obtain non-zero morphisms for:
\begin{enumerate}[a)]
\item $k+1 = i+1$; 
\item $k+1 > i+1$ and $k+1-i-1 < 1$; 
\item $i+1 > k+1$. 
\end{enumerate}
This yields
\begin{equation} \label{dimHom_C_C}
\dim_k \Hom{\smcm[\BZ]{R}}{C_i}{C_k} = \begin{cases}
1 & i \geq k \\
0 & \text{otherwise}\,.
\end{cases}
\end{equation}
A generator for $\Hom{\smcm[\BZ]{R}}{C_i}{C_k}$ is given by multiplication by $y^{i-k}$. We see this by considering the map of the respective complete resolutions:
\begin{equation}\label{E:CtoC}
\begin{tikzcd}
\cdots \ar[r] & R(i) \ar[r,"xy"] \ar[d,"{y^{i-k}}"] & R(i) \ar[d, "{y^{i-k}}"] \ar[r,"x"] & R(i+1) \ar[d, "{y^{i-k}}"] \ar[r] & \cdots \\
\cdots \ar[r] & R(k) \ar[r,"xy"] & R(k) \ar[r,"x"] & R(k+1) \ar[r] & \cdots \nospacepunct{.}
\end{tikzcd}
\end{equation}

We compute the morphisms with target $D_k = \langle xy \rangle(k) \subseteq R(k)$; that is $m = k$. We obtain non-zero morphisms for:
\begin{enumerate}[a)]
\item never well-defined;
\item never well-defined;
\item $i \geq k$. 
\end{enumerate}
This yields
\begin{equation} \label{dimHom_C_D}
\dim_k \Hom{\smcm[\BZ]{R}}{C_i}{D_k} = \begin{cases}
1 & i \geq k \\
0 & \text{otherwise}\,.
\end{cases}
\end{equation}
A generator for $\Hom{\smcm[\BZ]{R}}{C_i}{D_k}$ is given by multiplication by $y^{i-k}$. We see this by considering the map of the respective complete resolutions:
\begin{equation}\label{E:CtoD}
\begin{tikzcd}
\cdots \ar[r] & R(i) \ar[r,"xy"] \ar[d,"{y^{i-k+1}}"] & R(i) \ar[d, "{y^{i-k}}"] \ar[r,"x"] & R(i+1) \ar[d, "{y^{i-k+1}}"] \ar[r] & \cdots \\
\cdots \ar[r] & R(k-1) \ar[r,"x"] & R(k) \ar[r,"xy"] & R(k) \ar[r] & \cdots \nospacepunct{.}
\end{tikzcd}
\end{equation}


From \cref{dimHom_C_C,dimHom_C_D} we deduce that $k[t]$ and the graded endomorphism ring of $C_0$ are isomorphic as graded vector spaces. With \cref{E:CtoC,E:CtoD}, we obtain an isomorphism as graded algebras.
\end{proof}

Consider the hypersurface $R' =S/\langle x^2 \rangle$ of type $A_\infty$, where $S = k[x,y]$ is our $\BZ$-graded polynomial ring with $|x| = 1$ and $|y| = -1$. We have now proved our main theorem:
%

\begin{proof}[Proof of \cref{T:main_thm}]
	This is a direct consequence of \cref{L:generates,L:endoring}, using \cite[Propositions~3.2~and~3.3]{August/Cheung/Faber/Gratz/Schroll:2023}.
\end{proof}

\begin{remark}\label{R:explicit_equivalence}
	Choosing the generator $k[y]$ for $\smcm[\BZ]{k[x,y]/\langle x^2 \rangle}$, we can describe an explicit equivalence $F$ from  $\smcm[\BZ]{k[x,y]/\langle x^2 \rangle}$ to  $\smcm[\BZ]{k[x,y]/\langle x^2y \rangle}$ which sends
\begin{eqnarray*}
	C_i &\mapsto& k[y](2i) \; \; \text{for $i \in \BZ$,} \\
	D_i &\mapsto& k[y](2i+1) \; \;  \text{for $i \in \BZ$,} \\
	E_{i,j} &\mapsto & \langle x, y^{2(j-i)-1} \rangle (2i)  \; \; \text{for $j \geq i+1$,} \\
	F_{i,j} &\mapsto & \langle x, y^{2(j-i) -1} \rangle (2i+1) \; \; \text{for $j \geq i+1$,}\\
	G_{i,j} &\mapsto & \langle x, y^{2(j-i)-2} \rangle(2i+1) \; \; \text{for $j \geq i+2$,} \\
	H_{i,j} & \mapsto & \langle x, y^{2(j-i)} \rangle (2i) \; \; \text{for $j  \geq i+1$}.
\end{eqnarray*}
In terms of the combinatorial model described in \cite[Section~4]{August/Cheung/Faber/Gratz/Schroll:2023}, the objects $C_i$ and $D_i$ correspond to infinite arcs in the completed $\infty$-gon, the former ending in even labels and the latter in odd labels, the object $E_{i,j}$ and $F_{i,j}$ correspond to arcs of even length, the former ending in even labels, and the latter in odd labels, and the object $G_{i,j}$ and $H_{i,j}$ correspond to arcs of odd length. In particular, for any object $M$ in $\smcm[\BZ]{k[x,y]/\langle x^2 \rangle}$ we have 
\[
	F(M(1)) = F(M)(2)\,.
\]
The arcs are depicted in the following diagram:
\begin{center}
\includegraphics[scale=0.8]{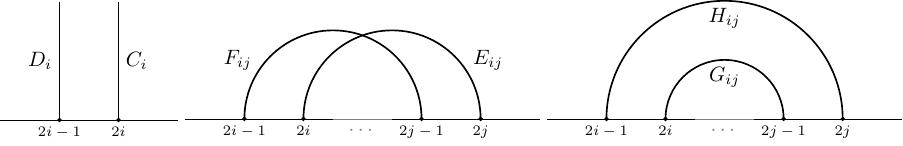}
\end{center}
\end{remark}

\begin{remark}
Note that the equivalence that we construct does not preserve the grading. 
While this follows from Remark \ref{R:explicit_equivalence}, there is also a conceptual reason: Take the respective orbit categories by the grading shift functor. If the equivalence preserved the grading, then the equivalence would descend to the orbit categories. Each of the orbit categories is equivalent to its respective non-graded singularity category by \cite[Proposition A.7]{Keller/Murfet/VandenBergh:2011} and the fact that all maximal Cohen-Macaulay modules over $A_\infty$- and $D_\infty$-singularities are graded. However, the non-graded singularity categories are not equivalent by e.g. \cite[p.23,p.25]{Schreyer:1987}; see also \cite[Theorem 5.3]{Matsui:2019}.
\end{remark}

\appendix

\section{Stable Hom spaces}

Similar to the computations of the morphisms between $C_i$ and $D_i$ in $\smcm[\BZ]{R}$ in the proof of \cref{L:endoring}, one can compute the morphisms between any two (non-trivial) indecomposable graded MCM-modules. The computations involving $E_{i,j}$, $F_{i,j}$, $G_{i,j}$ or $H_{i,j}$ are more involved, as the modules are bigger. We provide the explicit calculations for $\Hom{\smcm[\BZ]{R}}{E_{i,j}}{G_{k,\ell}}$ and summarize the results in the remaining cases.

For the morphisms with source $E_{i,j}$ with $j > i$ we consider the diagram
\begin{equation*}
\begin{tikzcd}[ampersand replacement=\&,column sep=large]
\CR(E_{i,j}) \&[-4em] : \&[-3em] \cdots \ar[r] \&[-1em] {\begin{matrix} R(i) \\ R(j) \end{matrix}} \ar[r,"{\left(\begin{smallmatrix} xy & y^{j-i} \\ 0 & -xy \end{smallmatrix}\right)}"] \& {\begin{matrix} R(i) \\ R(j) \end{matrix}} \ar[d] \ar[r,"{\left(\begin{smallmatrix} x & y^{j-i-1} \\ 0 & -x \end{smallmatrix}\right)}"] \& {\begin{matrix} R(i+1) \\ R(j+1) \end{matrix}} \ar[r] \&[-1em] \cdots \nospacepunct{.} \\
\&\&\&\& R(m)
\end{tikzcd}
\end{equation*}
This yields the following cocycles with associated homotopies:
\begin{enumerate}[a)]
\item $\begin{pmatrix} 0 & x^{m-j} \end{pmatrix} = \partial \begin{pmatrix} 0 & -x^{m-j-1} \end{pmatrix}$ when $m > j+1$; 
\item $\begin{pmatrix} 0 & x \end{pmatrix} = \partial \begin{pmatrix} 0 & -1 \end{pmatrix}$ when $m = j+1$; 
\item $\begin{pmatrix} 0 & xy^{j-m+1} \end{pmatrix} = \partial \begin{pmatrix} 0 & -y^{j-m+1} \end{pmatrix}$ when $j+1 > m \geq i+2$; 
\item $\begin{pmatrix} 0 & xy^{j-m+1} \end{pmatrix} = \partial \begin{pmatrix} 0 & -y^{j-m+1} \end{pmatrix} = \partial \begin{pmatrix} xy^{i-m+2} & 0 \end{pmatrix}$ when $i+2 > m$; 
\item $\begin{pmatrix} x^2 & 0 \end{pmatrix} = \partial \begin{pmatrix} x & y^{j-i-1} \end{pmatrix}$ when $m = i+2$; 
\item $\begin{pmatrix} x^{m-i} & 0 \end{pmatrix} = \partial \begin{pmatrix} x^{m-i-1} & 0 \end{pmatrix}$ when $m > i+2$; 
\item $\begin{pmatrix} xy^{i-m+1} & y^{j-m} \end{pmatrix} = \partial \begin{pmatrix} y^{i-m+1} & 0 \end{pmatrix}$ when $i+1 \geq m$. 
\end{enumerate}

We compute the morphisms with target $G_{k,\ell} = \langle x,y^{\ell-k-1}\rangle(k+1) \subseteq R(k)$ with $\ell > k+1$; that is $m = k+1$. We obtain non-zero morphisms for:
\begin{enumerate}[a)]
\item $k+1 > j+1$ and false;
\item $k+1 = j+1$ and true;
\item $j+1 > k+1 \geq i+2$ and $j-k-1+1 < \ell-k-1$;
\item $i+2 > k+1$ and $j-k-1+1 < \ell-k-1$ and false; 
\item $k+1 = i+2$ and $j-i-1 < \ell-k-1$; 
\item $k+1 > i+2$ and false;
\item $i+1 \geq k+1$ and $j-k-1 \geq \ell-k-1$ and $i-k-1+1 < \ell-k-1$.
\end{enumerate}
The case $j = k = i+1$ appears in b) and e), and the case $j > k = i+1$ appears in c) and e). However, the morphisms are the same up to sign in $\smcm[\BZ]{R}$ as
\begin{equation*}
\begin{pmatrix} 0 & x y^{j-i-1} \end{pmatrix} = - \begin{pmatrix} x^2 & 0 \end{pmatrix} + \partial \begin{pmatrix} x & 0 \end{pmatrix}\,.
\end{equation*}
This yields
\begin{equation*}
\dim_k \Hom{\smcm[\BZ]{R}}{E_{i,j}}{G_{k,\ell}} = \begin{cases}
1 & \ell-1 > j \geq k > i \\
1 & j \geq \ell > i+1 \geq k+1 \\
0 & \text{otherwise}\,.
\end{cases}
\end{equation*}

The computations in the other cases yield similar results; the space of morphisms is either 1-dimensional or zero. The conditions simplify when we consider
\begin{equation*}
\sExt[1]{R}{X}{Y} = \Hom{\smcm[\BZ]{R}}{\Omega X}{Y}\,.
\end{equation*}
We summarize the results in \cref{sExt_conditions}.

\begin{figure}
\centering
\includegraphics[scale=0.85]{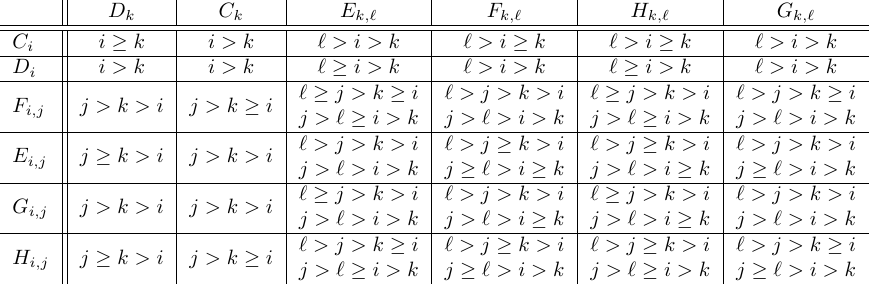}
\caption{The table provides the condition when $\Hom{\smcm[\BZ]{R}}{\Omega X}{Y}$ is 1-dimensional over $k$. In all other cases the set of morphisms in $\smcm[\BZ]{R}$ is zero. }
\label{sExt_conditions}
\end{figure}

\bibliographystyle{amsalpha}
\bibliography{references}

\end{document}